\let\proof\@undefined
\let\endproof\@undefined
\newtheorem{defn}{Definition}
\newtheorem{ass}{Assumption}
\newtheorem{thm}{Theorem}
\newtheorem{prop}{Proposition}
\newtheorem{rem}{Remark}
\newcommand{\R}{\mathbb{R}}
\newcommand{\N}{\mathbb{N}}
\newcommand{\PP}{\mathbb{P}}
\newcommand{\PE}{\mathbb{E}}
\newcommand{\X}{\mathbb{X}}
\newcommand{\Z}{\mathbb{Z}}
\newcommand{\V}{\mathbb{V}}
\newcommand{\W}{\mathbb{W}}
\title{
  An Improved Constraint-Tightening Approach for Stochastic MPC
}
\author{Matthias Lorenzen$^{1}$,
Frank Allg\"ower$^{1}$,
Fabrizio Dabbene$^{2}$,
and Roberto Tempo$^{2}$
\thanks{$^{1}$Institute for Systems Theory and Automatic Control, University of Stuttgart, 
Germany
{\tt\small \{matthias.lorenzen,\allowbreak frank.allgower\}@ist.uni-stuttgart.de}\newline
$^{2}$CNR-IEIIT, Politecnico di Torino, 
Italy
{\tt\small \{roberto.tempo,\allowbreak fabrizio.dabbene\}@polito.it}
}%
}
\begin{document}
\maketitle
\thispagestyle{empty}
\pagestyle{empty}

\begin{abstract}
  The problem of achieving a good trade-off in Stochastic Model Predictive Control between the competing goals of improving the average performance and reducing conservativeness, while still guaranteeing recursive feasibility and low computational complexity, is addressed. 
  We propose a novel, less restrictive scheme which is based on considering stability and recursive feasibility separately. 
  Through an explicit first step constraint we guarantee recursive feasibility. In particular we guarantee the existence of a feasible input trajectory at each time instant, but we only require that the input sequence computed at time $k$ remains feasible at time $k+1$ for most disturbances but not necessarily for all, which suffices for stability.
  To overcome the computational complexity of probabilistic constraints, we propose an offline constraint-tightening procedure, which can be efficiently solved via a sampling approach to the desired accuracy. 
  The online computational complexity of the resulting Model Predictive Control (MPC) algorithm is similar to that of a nominal MPC with terminal region. 
  A numerical example, which provides a comparison with classical, recursively feasible Stochastic MPC and Robust MPC, shows the efficacy of the proposed approach.
%
\end{abstract}

\section{Introduction}
  Stochastic Model Predictive Control (SMPC) formulations, although being computationally much harder than their robust counterpart, have become increasingly popular due to their improved performance and increased region of attraction. A probabilistic description of the disturbance or uncertainty allows to optimize the average performance and to reduce conservatism compared to robust schemes, through allowing a (small) probability of constraint violation. Still, hard constraints, e.g. due to physical limitations, can be considered in the same setup. 

  In Model Predictive Control recursive feasibility, which is essential for stability, is usually guaranteed through showing that the planned input trajectory remains feasible in the next optimization step. In Robust MPC this is done through guaranteeing that the input trajectory remains feasible for all possible disturbances.
  In Stochastic MPC a certain probability of future constraint violation is usually allowed. This leads to significantly less conservative constraint tightening for the predicted input and state because worst case scenarios become very unlikely. However, the probability distribution of the state prediction at some future time depends on both the current state and the time to go. Hence, even under the same control law the violation probability changes from time $k$ to time $k+1$ and might render the optimization problem infeasible.
  The second difficulty unique to SMPC is to render the chance constraints -- constraints on multivariate integrals -- computationally tractable without becoming overly conservative.
  Finally, for nonlinear systems the uncertainty propagation becomes another nontrivial difficulty.

  Significant progress to rigorously addressing the first problem has been done in \cite{Kouvaritakis2010_ExplicitUseOfProbConstr,Cannon2011_StochasticTubesinMPC} where ``recursively feasible probabilistic tubes'' for constraint tightening are proposed. Instead of considering the probability distribution $\ell$ steps ahead given the current state, the probability distribution $\ell$ steps ahead given \emph{any} realization in the first $\ell-1$ steps is considered. This essentially leads to a constraint tightening with $\ell-1$ worst-case and one stochastic prediction for each prediction time $\ell$.
  In \cite{Korda2011_StronglyFeasibleSMPC} the authors propose to compute a control invariant region and to restrict the next state to be inside this region. This procedure leads to a feasible region which is least restrictive, given the affine feedback structure in the MPC control law, but stability issues are not discussed.
  In \cite{Bernandini2013_ScenarioBasedMPCofStochConstrLinSys} this problem is circumvented through optimizing the average performance but considering worst case constraint satisfaction.

  The second problem, tractability of chance constraints, has gained more attention and different methods of relaxation have been proposed in the MPC literature.
  For linear systems with additive stochastic disturbance, the system is usually decomposed into a deterministic, \emph{nominal} part and an autonomous system involving only the uncertain part.
  The approaches can then be divided into (i) computing a confidence region for the uncertain part and using this for constraint tightening, see \cite{Cannon2011_StochasticTubesinMPC} for an ellipsoidal confidence region, and (ii) direct constraint tightening given the evolution of the uncertain part, e.g. \cite{Kouvaritakis2010_ExplicitUseOfProbConstr} and \cite{Korda2011_StronglyFeasibleSMPC}.
  A slightly different approach is taken in \cite{Zhang2013_SochasticMPC}, where the authors first determine a confidence region for the disturbance sequence, as well, but then employ robust optimization techniques.
  For linear systems with parametric uncertainty, \cite{Cheng2014_SMPCforSysWithMultAndAddDist} proposes to decompose the uncertainty tube into a stochastic part offline and a robust part which is computed online. The paper \cite{Fleming2014_StochasticTubeMPCforLPVSysWithSetInclCond} computes online a stochastic tube of fixed complexity using a sampling technique, but which leads to solving a mixed integer problem online.
  In \cite{Kanev2006RobustMPC, Calafiore2013StochasticMPC} the authors use an online sampling approach to cope with the chance constraint and determine in each iteration an optimal feedback gain respectively feed-forward input. While this approach allows for nearly arbitrary uncertainty in the system, the online optimization effort increases dramatically and recursive feasibility cannot be guaranteed. In \cite{Zhang2014_OnSampleSizeOfRandMPCwAppl} the authors use an online sampling approach as well and show how the number of samples can be decreased significantly.
  For nonlinear systems the problem of uncertainty evolution has recently been addressed in \cite{Mesbah2014_StochasticNonlinearMPC} using polynomial chaos expansion.

  The main contribution of this paper is to propose a Stochastic MPC scheme which combines the advantages of the least restrictive approach in \cite{Korda2011_StronglyFeasibleSMPC} and the stability of \cite{Cannon2011_StochasticTubesinMPC}.
  Unlike previous works, we explicitly allow the case when the optimized input sequence does not remain feasible in the next time instance -- but only up to a desired probability $\epsilon_f$.
  With $\epsilon_f=1$ the least restrictive scheme of \cite{Korda2011_StronglyFeasibleSMPC} and with $\epsilon_f=0$, SMPC with recursively feasible probabilistic tubes are recovered. Already for small values of $\epsilon_f$ a significant increase of the feasible region is gained.
  Recursive feasibility is guaranteed through an additional constraint on the first step.
  The resulting offline chance constrained programs are solved efficiently to the desired accuracy using a sampling approach.

  The remainder of this paper is organized as follows. Section \ref{sec:ProbSetup} introduces the receding horizon problem to be solved. In Section \ref{sec:MainRes} the main results are stated, starting with a suitable constraint reformulation, followed by comments on offline solution of the involved chance constraint problems, recursive feasibility of the receding horizon optimization and finally the complete MPC algorithm. Numerical examples that underline the advantages of the proposed scheme are given in Section \ref{sec:NumExample}. Finally Section \ref{sec:Concl} provides some final conclusions and directions for future work. 
\paragraph*{Notation}
The notation employed is standard. Uppercase letters are used for matrices and lower case for vectors. $[A]_j$ and $[a]_j$ denote the $j$-th row and entry of the matrix $A$ and vector $a$, respectively.
Positive (semi)definite matrices $A$ are denoted $A \succ 0$ ($A \succeq 0$) and  $\|x\|_A^2 = x^\top A x$
$\N_+$ denotes the positive integers and ${\N_0 = \{0\} \cup \N_+}$.
We use $x_k$ for the (real, measured) state at time $k$ and $x_{l|k}$ for the state predicted $l$ steps ahead at time $k$.

\section{Problem Setup} \label{sec:ProbSetup}
Consider the linear time-invariant system with state $x_k \in \R^n$, control input $u_k \in \R^{m}$ and additive stochastic disturbance $w_k \in \W \subset \R^{m_w}$
\begin{equation}
  x_{k+1} = A x_k + B u_k + B_w w_k.
  \label{eqn:xsystem}
\end{equation}
In the following we assume that $w_k$ for $k=0,1,2,\ldots$ are independent realizations of a real valued random variable $W$ with 
realizations in $\W$.
Furthermore we assume that $W$ has zero mean and finite variance. 
$\W$ is assumed to be convex (or a convex outer approximation is given) and bounded to include the case of hard constraints.

The system is subject to probabilistic constraints on the state and hard constraints on the input
\begin{subequations}
  \begin{align}
    \PP_k\{ [H]_j x_{k} &\le [h]_j \} \ge 1-[\varepsilon]_j \quad & j\in[1,p],~ k \in \N_+\label{eqn:probConstraints} \\
    G u_{k} &\le g  \quad &k \in \N_0 \label{eqn:inputConstraints}
  \end{align}
  \label{eqn:origConstraints}
\end{subequations}
\hspace{-0.2cm} 
with $H \in \R^{p\times n}$, $G \in \R^{q\times m}$, $h \in \R^{p}$, $g \in \R^{q}$, $\varepsilon \in [0,1)^p$ and $\PP_k$ the $k$-fold product probability measure of $\PP$. 
Equation~\eqref{eqn:probConstraints} restricts to $[\varepsilon]_j$ the probability of violating state constraint $j$ at the future time $k$, given the probability measure of the disturbance sequence $w_0, \ldots, w_{k-1}$ and the current state $x_0$.

The control objective is to determine a receding horizon control, which (approximately) minimizes $J_\infty$, the expected value of an infinite horizon quadratic cost
\begin{equation}
  J_\infty = \lim_{t \rightarrow \infty} \frac{1}{t} \sum_{i=0}^t \PE \left\{ x_i^\top Qx_i + u_i^\top Ru_i \right\}
  \label{eqn:infHorizonCost}
\end{equation}
with $Q\in \R^{n\times n}$, $Q \succ 0$, $R\in \R^{m\times m}$, $R \succ 0$.

\subsection*{Receding Horizon Optimization}
Throughout this paper a standard MPC receding horizon approach with tightened constraints and terminal region is considered, cf. \cite{Rawlings2009_MPC}.

To cope with the state prediction under uncertainty the predicted state $x_{l|k}=z_l+e_l$ of the system is split into a deterministic, nominal part $z_l$ and a (stochastic) error part $e_l$. A prestabilizing error feedback $u_e = K e$ is employed which leads to the predicted input $u_{l|k} = Ke_l + v_l$ with $v_l$ the free MPC input. The system description of the predicted nominal state and error is given by
\begin{subequations}
  \begin{align}
    z_{l+1} &= A z_l + B v_l  &z_0 = x_k\label{eqn:zSys}\\
    e_{l+1} &= A_{cl}e_l + B_w w_l  &e_0 = 0 \label{eqn:eSys}
  \end{align}
  \label{eqn:zAndeSys}
\end{subequations}
with $A_{cl} = (A+BK)$.

The finite horizon cost $J_T(x_k,u_{[0|k, T-1|k ]})$ to be minimized at time $k$ is defined as
\begin{multline}
    J_T(x_k,u_{[0|k, T-1|k ]}) \\
    = \PE\left\{ \sum_{l=0}^{T-1} \left( x_{l|k}^\top Qx_{l|k} + u_{l|k}^\top Ru_{l|k} \right) + x_{T|k}^\top P x_{T|k}\right\}
  \label{eqn:finiteHorizonCostFnc}
\end{multline}
where $P$ is the solution to discrete-time Lyapunov equation $A_{cl}^\top P A_{cl}+  Q + K^\top R K^\top = P$.
The expected value can be solved explicitly which gives a quadratic, finite horizon cost function in the deterministic variables $z_0$ and $v$
\begin{equation}
  J_T(z_0,v_{[0,T-1]}) = \sum_{i=0}^{T-1} \left( z_i^\top Qz_i + v_i^\top Rv_i \right) + z_T^\top P z_T + c
  \label{eqn:finiteHorizonCostFncDet}
\end{equation}
where $c = \PE \left\{ \sum_{i=0}^{T-1} e_i^\top (Q +K^\top RK)e_i + e_n^\top Pe_n \right\}$ is a constant term and can be neglected in the optimization problem.

The full optimization problem is now stated, where $\Z_{l}$ and $\V_{l}$ are suitable constraints derived from~\eqref{eqn:origConstraints} and some terminal constraint.
\begin{defn}[Finite Horizon Optimization Problem]
  \begin{equation}
    \begin{aligned}
      \min_{v_0,\ldots,v_{T-1}} ~& J_T(z_0,v_{[0,T-1]})\\
      \text{s.t.} ~& z_{l+1} = A z_l + B v_l,  \quad z_0 = x_k \\
      ~& z_l \in \Z_{l}, ~ l\in[1,T]\\
      ~& v_l \in \V_{l}, ~ l\in[0,T-1].
    \end{aligned}
    \label{eqn:RecHorizonOptProg}
  \end{equation}
\end{defn}

\section{Main Results} \label{sec:MainRes}
In the following we address three of the main problems in SMPC. Namely, how to generate computationally tractable, nonconservative constraint sets $\Z_l$ and $\V_l$ such that (i) the constraints~\eqref{eqn:origConstraints} hold in closed loop operation, (ii) if initially feasible, the optimization remains feasible under all admissible disturbance sequences, and (iii) the closed loop system is stable (in a suitable sense).

\subsection{Constraint Tightening}
  Similar to \cite{Kouvaritakis2010_ExplicitUseOfProbConstr} we directly tighten the constraints offline. But in contrast we neither aim at recursively feasible probabilistic tubes nor at robust constraint tightening for the input.

\subsubsection*{State Constraints}
The probabilistic state constraints~\eqref{eqn:probConstraints} can be rewritten in terms of hard constraints $\Z_l$ on the predicted nominal state $z_{l}$.
\begin{prop} \label{prop:constrSatisf}
  If the nominal system~\eqref{eqn:zSys} satisfies the constraints 
  \begin{equation}
    H z_l \le \eta_{l} \quad l \in [1, T-1]
    \label{eqn:detStateConstraint}
  \end{equation}
  with
  \begin{equation}
    \begin{aligned}
      {[\eta_{l}]_j} = \max_{\eta} &~ \eta\\
      \text{s.t.} &~ \PP_{l}\left\{ \eta \le [h]_j -  [H]_j  e_l \right\} \ge 1- [\varepsilon]_{j}
    \end{aligned}
    \label{eqn:offlineChanceConstrProgr}
  \end{equation}
  for $l=1,\ldots,T-1$ and $j=1,\ldots,p$, then the real system~\eqref{eqn:xsystem} satisfies the chance constraints~\eqref{eqn:probConstraints} for $k=1,\ldots,T-1$ and $j=1,\ldots,p$.
\end{prop}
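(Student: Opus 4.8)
The plan is to reduce the statement to a simple event inclusion plus the defining feasibility of $[\eta_l]_j$ in~\eqref{eqn:offlineChanceConstrProgr}. Fix $j\in[1,p]$ and the prediction index $l\in[1,T-1]$ (the index written $k$ in the conclusion). When the prestabilized input trajectory $u_{l|k}=Ke_l+v_l$, with the $v_l$ returned by~\eqref{eqn:RecHorizonOptProg}, is applied to~\eqref{eqn:xsystem} starting from the current state, the resulting state at step $l$ is exactly $x_{l|k}=z_l+e_l$ as in~\eqref{eqn:zAndeSys}. Iterating~\eqref{eqn:eSys} gives $e_l=\sum_{i=0}^{l-1}A_{cl}^{\,l-1-i}B_w w_i$, so $e_l$ is a Borel function of $w_0,\dots,w_{l-1}$ alone, and its law is governed by the $l$-fold product measure $\PP_l$, the same measure appearing in~\eqref{eqn:probConstraints} and in~\eqref{eqn:offlineChanceConstrProgr}. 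It therefore suffices to show $\PP_l\{[H]_j x_{l|k}\le[h]_j\}\ge 1-[\varepsilon]_j$.

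First I would record the deterministic, realization-wise bound coming from the hypothesis $Hz_l\le\eta_l$, i.e. $[H]_j z_l\le[\eta_l]_j$, which yields
\[
  [H]_j x_{l|k}=[H]_j z_l+[H]_j e_l\le[\eta_l]_j+[H]_j e_l
\]
for every disturbance realization. Hence the event $\{[\eta_l]_j\le[h]_j-[H]_j e_l\}$ is contained in $\{[H]_j x_{l|k}\le[h]_j\}$, and by monotonicity of $\PP_l$,
\[
  \PP_l\{[H]_j x_{l|k}\le[h]_j\}\ \ge\ \PP_l\{[\eta_l]_j\le[h]_j-[H]_j e_l\}\ \ge\ 1-[\varepsilon]_j,
\]
where the last step is precisely the statement that $\eta=[\eta_l]_j$ is feasible for~\eqref{eqn:offlineChanceConstrProgr}. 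Since $j$ and $l$ were arbitrary, this proves the proposition.

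The one point that deserves care is that the supremum in~\eqref{eqn:offlineChanceConstrProgr} is attained, so that $[\eta_l]_j$ really is a feasible point and the final inequality above is legitimate. Writing the constraint as $F_{l,j}([h]_j-\eta)\ge 1-[\varepsilon]_j$, where $F_{l,j}$ is the nondecreasing, right-continuous cdf of the scalar random variable $[H]_j e_l$, the feasible set is the closed half-line $\{\eta\le[h]_j-q_{l,j}\}$ with $q_{l,j}$ the lower $(1-[\varepsilon]_j)$-quantile of $[H]_j e_l$; right-continuity of $F_{l,j}$ ensures that $\eta=[h]_j-q_{l,j}$ itself satisfies the constraint, so the supremum is a maximum, while boundedness of $\W$ (hence of $e_l$) makes the feasible set nonempty. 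I expect this to be the only mildly technical step; everything else is bookkeeping. Note also that the argument uses only measurability and boundedness of the disturbance, not the particular distribution of $W$, which is exactly what makes the offline program amenable to the sampling treatment discussed next.
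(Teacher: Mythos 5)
Your proof is correct and follows essentially the same route as the paper's: decompose $x_{l|k}=z_l+e_l$, use the realization-wise bound $[H]_j z_l\le[\eta_l]_j$ to obtain the event inclusion, and invoke feasibility of $[\eta_l]_j$ in~\eqref{eqn:offlineChanceConstrProgr} together with monotonicity of $\PP_l$. Your additional observation that the maximum in~\eqref{eqn:offlineChanceConstrProgr} is attained (via right-continuity of the cdf of $[H]_j e_l$) is a technicality the paper leaves implicit, but it does not change the argument.
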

\begin{proof}
The constraint~\eqref{eqn:probConstraints} can be rewritten in terms of $z_l$ and $e_l$ as
\begin{equation*}
  \PP_l\left\{  [H]_j z_l \le [h]_j -  [H]_j e_l \right\} \ge 1-[\varepsilon]_{j}
\end{equation*}
with $e_l$ being the solution to~\eqref{eqn:eSys}. Given $[H]_j z_l \le [\eta_{l}]_j$ and $\PP_{l}\left\{ [\eta_l]_j \le [h]_j -  [H]_j  e_l \right\} \ge 1- [\varepsilon]_{j}$ it follows that $\PP_{l}\left\{ [H]_j z_l \le [h]_j -  [H]_j  e_l \right\} \ge 1- [\varepsilon]_{j}$.
\end{proof}
Proposition~\ref{prop:constrSatisf} leads to $(T-1)p$ one dimensional, linear chance constrained optimization problems~\eqref{eqn:offlineChanceConstrProgr} that need to be solved offline. Efficient methods will be presented in the next subsection.

\subsubsection*{Input Constraints}
Instead of a robust constraint tightening for the hard constraints on the input $u$, we propose a stochastic constraint tightening as well. In other words, we take advantage of the probabilistic nature of the disturbance and require that the combination of MPC feedforward input sequence and static error feedback remains feasible for most, but not for all possible disturbance sequences. This is in line with the fact that at a later time the optimal input is recomputed and adapted to the disturbance realization.

Let $\epsilon_u \in [0,1)$ be a small probability. Similarly to the state constraint tightening, we replace the original constraint~\eqref{eqn:inputConstraints} with 
\begin{equation}
  G v_l \le \mu_l \quad l \in [0, T-1]
  \label{eqn:detInputConstraint}
\end{equation}
where again $\mu_l = [\mu_{l1} \ldots \mu_{lq}]$ are the solutions to $T q$ one dimensional, linear chance constrained optimization problems
\begin{equation}
  \begin{aligned}
    {[\mu_l]_j} = \max_{\mu} &~ \mu\\
    \text{s.t.} &~ \PP_{l}\left\{ \mu \le [g]_j -  [G]_j  K e_l \right\} \ge 1- \varepsilon_u.
  \end{aligned}
  \label{eqn:offlineChanceConstrProgrU}
\end{equation}


\subsubsection*{Terminal Constraint}
We first construct a recursively feasible admissible set under the local control law and then employ a suitable tightening to determine the terminal constraint $\Z_T$ for the nominal system.
\begin{prop}
  For the system~\eqref{eqn:xsystem} with input $u=Kx$ let $\X_T=\{H_T x \le h_T\}$ be a (maximal) robust positive invariant polytope inside
  \begin{equation*}
    \tilde \X_T = \left\{ x ~|~ H A_{cl} x \le \eta_{1},~ G K x \le g \right\}
  \end{equation*}
  with $\eta_1 = [\eta_{11}, \eta_{12}, \ldots, \eta_{1p}]$ according to~\eqref{eqn:offlineChanceConstrProgr}.
  For any initial condition in $\X_T$ the constraints~\eqref{eqn:origConstraints} are satisfied in closed loop operation with the local control law $u=Kx$ for all $k \ge 0$.
\end{prop}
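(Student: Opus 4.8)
The plan is to use the robust positive invariance of $\X_T$ to reduce closed-loop constraint satisfaction to a single-step argument, and then to invoke the definition of $\eta_1$ in~\eqref{eqn:offlineChanceConstrProgr}.

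First I would note that under $u=Kx$ the closed loop is the autonomous system $x_{k+1}=A_{cl}x_k+B_ww_k$. Since $\X_T$ is robust positive invariant for this system and $x_0\in\X_T$, a straightforward induction gives $x_k\in\X_T$ for every $k\ge 0$ and every admissible disturbance sequence; only invariance, not maximality, is needed here. Because $\X_T\subseteq\tilde\X_T$, this already yields $GKx_k\le g$ for all $k\ge 0$, i.e.\ the hard input constraint~\eqref{eqn:inputConstraints}, and it also yields $HA_{cl}x_k\le\eta_1$ for all $k\ge 0$.

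For the probabilistic state constraint~\eqref{eqn:probConstraints} at a fixed $k\in\N_+$ and row index $j$, I would write $[H]_jx_k=[H]_jA_{cl}x_{k-1}+[H]_jB_ww_{k-1}$. Since $x_{k-1}\in\X_T\subseteq\tilde\X_T$, the bound $[H]_jA_{cl}x_{k-1}\le[\eta_1]_j$ holds surely, so pointwise the event $\{[\eta_1]_j\le[h]_j-[H]_jB_ww_{k-1}\}$ is contained in $\{[H]_jx_k\le[h]_j\}$. Now $w_{k-1}$ is independent of $x_{k-1}$ (which is a function of $x_0,w_0,\dots,w_{k-2}$) and has the same law as $W$; this is exactly the law of $e_1=B_ww_0$ appearing in~\eqref{eqn:offlineChanceConstrProgr} for $l=1$, since $e_0=0$ in~\eqref{eqn:eSys}. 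Hence the definition of $[\eta_1]_j$ gives $\PP\{[\eta_1]_j\le[h]_j-[H]_jB_ww_{k-1}\}\ge 1-[\varepsilon]_j$. Combining the inclusion with this bound, and noting that $\PP_k$ reduces to the marginal law of $w_{k-1}$ on an event that depends only on $w_{k-1}$, I obtain $\PP_k\{[H]_jx_k\le[h]_j\}\ge 1-[\varepsilon]_j$, which is~\eqref{eqn:probConstraints}.

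The only delicate point is the probabilistic bookkeeping of the last step: one must verify that the sure bound on $[H]_jA_{cl}x_{k-1}$ permits discarding the dependence on the history $w_0,\dots,w_{k-2}$ and comparing directly against the one-step chance constraint that defines $\eta_1$. Everything else is the standard invariance/induction argument, together with the observation that $\tilde\X_T$ is deliberately phrased in terms of $HA_{cl}x$ rather than $Hx$ so that the one-step shift matches the propagation $x_k=A_{cl}x_{k-1}+B_ww_{k-1}$.
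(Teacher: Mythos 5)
Your proposal is correct and follows essentially the same route as the paper: robust positive invariance keeps $x_{k-1}\in\X_T\subseteq\tilde\X_T$ for all $k$ and all disturbances, and the one-step conditional chance constraint defining $\eta_1$ (together with the hard input bound $GKx\le g$) is then sufficient for~\eqref{eqn:origConstraints}. The paper simply states the conditional bound $\PP_k\{[H]_j x_k\le[h]_j\,|\,x_{k-1}\}\ge 1-[\varepsilon]_j$ and asserts sufficiency, whereas you make the marginalization over the history $w_0,\dots,w_{k-2}$ explicit; that is a welcome elaboration, not a different argument.
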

\begin{proof}
  By definition the set $\X_T$ is forward invariant for all disturbances and
  the constraints
  \begin{equation*}
    \PP_k\{ [H]_j x_{k} \le [h]_j ~|~ x_{k-1} \} \ge 1-[\varepsilon]_j \quad \forall j\in[1,p]
  \end{equation*}
  are satisfied for all states $x_{k-1} \in \X_T$, which is sufficient for~\eqref{eqn:probConstraints}.
\end{proof}
For an in depth theoretical discussion, practical computation and polytopic approximations of $\X_T$ see \cite{Blanchini1999_SetInvarianceInControl} for an overview or \cite{Kolmanovsky1998_TheoryAndComputationOfDisturbanceInvariantSets} for details.

To define the terminal constraint $\Z_T$ for the nominal system, a constraint tightening approach similar to~\eqref{eqn:offlineChanceConstrProgr} is necessary. Let $\epsilon_T \in [0,1)$ be a small probability, we define the terminal region
  \begin{equation}
    \Z_T = \{ z ~|~ H_T z \le \eta_T \}
    \label{eqn:termConstr}
  \end{equation}
  with
 \begin{equation}
  \begin{aligned}
    {[\eta_T]_j} = \max_{\eta} &~ \eta\\
    \text{s.t.} &~ \PP_{l}\left\{ \eta \le [h_T]_j -  [H_T]_j  K e_T \right\} \ge 1- \varepsilon_T.
  \end{aligned}
  \label{eqn:offlineChanceConstrProgrTermConstr}
\end{equation} 

\subsection{Solving the Single Chance Constrained Programs}
There is a vast literature on how to solve optimization programs involving single chance constraints. In the following we briefly state the deterministic solution and then show how to efficiently solve the offline problems~\eqref{eqn:offlineChanceConstrProgr}, \eqref{eqn:offlineChanceConstrProgrU} and \eqref{eqn:offlineChanceConstrProgrTermConstr} using a sampling approach.
\subsubsection{Deterministic}
Chance constraints are constraints on multivariate integrals, in particular, if the random variable $W$ has a known probability density function $f_W(w)$ we can write~\eqref{eqn:offlineChanceConstrProgr} as
\begin{equation*}
  \begin{aligned}
      {[\eta_{l}]_j} = \max_{\eta} &~ \eta\\
    \text{s.t.} &~ \int_{\W^l} \mathbf{1} \left\{ \eta \le [h]_j - \left[ H \right]_j  e_l  \right\} \\
    &\hspace{2.0cm} \prod_{i=0}^{l-1}f_W(w_i) d w_0 \cdots dw_{l-1} \ge 1-[\varepsilon]_j
  \end{aligned}
\end{equation*}
with $e_l = \sum_{i=0}^{l-1}A_{cl}^iB_w w_i$ and $\mathbf{1}\{\cdot\}$ being the indicator function.
The multivariate integral can be further simplified if the convolution of the distributions of $B_w w_i$ is known, e.g. $W$ is Gaussian. If even the inverse cumulative distribution function $Q$ of $[ H ]_j e_l$ is known or can be approximated, e.g. $W$ being Gaussian, then
\begin{equation*}
  {[\eta_l]_j} = [h]_j - Q(1-[\varepsilon]_j).
\end{equation*}
For further discussion on convexity and explicit numerical solution cf. \cite{prekopa2010_StochasticProgramming} and references therein.

\subsubsection{Sampling}
Recently, sampling techniques to solve robust and chance constrained problems have gained increased interest, see \cite{Tempo2012_RandAlgForAnalysisAndDesign,Calafiore2011_ProbMethodsFrContrSysDesign} for further discussion about randomized algorithms. They are easy to implement and specific guarantees about their solution can be given. Furthermore, they allow to directly use complicated simulations or measurements of the error instead of determining a probability density function. A linear relation $B_w w$ is not necessary but $B_w(w)$ can be assumed instead. 

The chance constrained problem~\eqref{eqn:offlineChanceConstrProgr}, \eqref{eqn:offlineChanceConstrProgrU}, \eqref{eqn:offlineChanceConstrProgrTermConstr} can as well be efficiently solved to the desired accuracy by drawing a sufficiently large number $N_s$ of samples $w^{(i)}$ from $W$ and require the constraint to hold for all, but a fixed number $r$ of samples.
In \cite{Campi2011_SampleAndDiscardApprTpCCOpt, Calafiore2010_RandomConvexPrograms} the authors give explicit bounds on how to choose $N_s$ and $r$ such that the optimal solution of the sampled problem has the desired accuracy with an a priori specified confidence.

In general, one has to solve a mixed integer problem or to use heuristics to discard samples in an optimal way. Here, due to the simple structure, a sort algorithm is used to solve the sampled approximation of~\eqref{eqn:offlineChanceConstrProgr}.
\begin{prop}
  Let $N_s$ and $r$ be chosen according to \cite{Campi2011_SampleAndDiscardApprTpCCOpt} for an accuracy $\alpha$ and confidence $\beta$.
  Let $q_{1-r/N_s}$ be the $(1-r/N_s)$-quantile of the set $\left\{ [H]_j e_l^{(i)}\right\}_{i=1,\ldots,N_s}$ with ${e_l^{(i)} = \sum_{j=1}^l A_{cl}^{j-1}B_w w_j^{(i)}}$ independently chosen samples from $W^l$.
  Then with confidence $\beta$
  \begin{equation*}
    {[\eta_l]_j} = [h]_j - q_{1-r/N_s}
  \end{equation*}
  solves~\eqref{eqn:offlineChanceConstrProgr} with an accuracy $\alpha$.
\end{prop}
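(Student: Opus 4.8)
The plan is to reduce the claim to the sample-and-discard feasibility theorem of~\cite{Campi2011_SampleAndDiscardApprTpCCOpt}, after first identifying the expression $[h]_j - q_{1-r/N_s}$ as the exact optimizer of the corresponding sampled program. Concretely, I would write out the sampled-and-discarded counterpart of~\eqref{eqn:offlineChanceConstrProgr}: drawing $N_s$ independent samples of the length-$l$ disturbance sequence and discarding $r$ of the induced scenario constraints,
\begin{equation*}
  \max_{\eta}~\eta \quad\text{s.t.}\quad \eta \le [h]_j - [H]_j e_l^{(i)},\quad i \in \{1,\dots,N_s\}\setminus\mathcal{R},\ |\mathcal{R}|=r .
\end{equation*}
Since the cost is increasing in $\eta$ and each retained scenario contributes exactly one upper bound on $\eta$, the objective-maximizing discard set $\mathcal{R}$ removes the $r$ scenarios with the smallest right-hand side, i.e.\ the $r$ largest values of $[H]_j e_l^{(i)}$; the remaining binding bound is then $[h]_j$ minus the $(r+1)$-th largest value of $[H]_j e_l^{(i)}$, which is precisely the empirical quantile $q_{1-r/N_s}$. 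Hence I would argue that the sorting routine returns $[h]_j - q_{1-r/N_s}$ and that this is the (generically unique) optimizer of the sampled program, with all $r$ discarded constraints strictly violated by it.

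Next I would observe that~\eqref{eqn:offlineChanceConstrProgr} is a chance-constrained program with a one-dimensional decision variable and, for every realization of $e_l$, a convex (in fact affine) feasible set $\{\eta \le [h]_j - [H]_j e_l\}$, so the scenario-with-discard theory applies with decision dimension $d=1$. Then, choosing $N_s$ and $r$ according to the bound of~\cite{Campi2011_SampleAndDiscardApprTpCCOpt} for violation level $\alpha$ and confidence $\beta$ — for instance any pair with $\sum_{i=0}^{r}\binom{N_s}{i}\alpha^i(1-\alpha)^{N_s-i}\le\beta$ — the cited theorem guarantees that, with probability at least $1-\beta$ over the sample draw, the optimizer of the sampled program is feasible for the chance constraint at the prescribed level. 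Combining this with the previous step yields the statement.

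The main obstacle, I expect, is purely in checking that the plain sorting heuristic is a legitimate instance of the discarding rules covered by~\cite{Campi2011_SampleAndDiscardApprTpCCOpt}: one must verify that all $r$ discarded scenarios are violated by the returned solution and that the optimizer is (generically) unique. Both hold once ties among the $[H]_j e_l^{(i)}$ are excluded, which occurs with probability one whenever $W$ admits a density; in the degenerate case ties are broken by a fixed rule. A secondary, bookkeeping point is to match the ``accuracy $\alpha$'' of the statement with the violation-level parameter of the cited bound and to substitute $d=1$ there. Everything else is routine.
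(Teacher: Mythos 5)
Your proposal is correct and follows exactly the route the paper intends: the paper states this proposition without a written proof, relying on the preceding remark that a sort algorithm solves the sampled approximation and on the cited sample-and-discard bounds of \cite{Campi2011_SampleAndDiscardApprTpCCOpt}. Your writeup supplies precisely the omitted verification steps --- that discarding the $r$ largest values of $[H]_j e_l^{(i)}$ is the optimal discard, that the resulting optimizer is the stated empirical quantile, that the decision dimension is $d=1$, and that the discarded constraints are violated by the returned solution (with ties a measure-zero event) --- so it matches the paper's argument while being more complete.
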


\subsection{Recursive Feasibility}
As it has been pointed out in previous publications, e.g. \cite{Kouvaritakis2010_ExplicitUseOfProbConstr,Cannon2011_StochasticTubesinMPC}, the probability of constraint violation $\ell$ steps ahead at time $k$ is not the same as $\ell-1$ steps ahead at time $k+1$ given the realization of state $x_{k+1}$. Hence, the tightened constraints~\eqref{eqn:detStateConstraint}, \eqref{eqn:termConstr} and \eqref{eqn:detInputConstraint} do not guarantee recursive feasibility.

A commonly used approach to recover recursive feasibility is to use a mixed worst-case/stochastic prediction for constraint tightening. In \cite{Korda2011_StronglyFeasibleSMPC} the authors pointed out that this approach guarantees recursive feasibility, but is rather restrictive and leads to higher average costs if the optimal solution is ``near'' a chance constraint. The authors propose to use a first step constraint to obtain a recursively feasible algorithm which is, given the affine feedback structure in the MPC, least restrictive.

In the following we propose a hybrid strategy: We impose a first step constraint to guarantee recursive feasibility \emph{and} the previously introduced stochastic tube tightening with terminal constraint and cost to guarantee stability. At the cost of further offline reachability and controllability set computation, the proposed approach has the advantage of being less conservative, but yet guaranteed to stabilize the system at the minimal positive invariant region.

Let
\begin{equation*}
  C_T = \left\{ \begin{bmatrix} z_0 \\ v_0  \end{bmatrix} \in \R^{n+m} :
    \begin{array}[h]{l}
      \exists v_1, \ldots, v_{T-1} \in \R^m\\
      z_{l+1} = A z_l + B v_l \\
      H z_l \le \eta_{l}, ~ l\in[1,T-1]\\
      G v_l \le \mu_{l}, ~ l\in[0,T-1]\\
      H_T z_T \le \eta_T
    \end{array}
\right\}
\end{equation*}
be the $T$-step set and allowed first step input for the nominal system under the tightened constraints. The set can be computed via standard recursion e.g. \cite{Gutman1987_AlgorithmToFindMaximalStateConstrSet}. $ C_T$ defines the feasible states and first inputs of the receding horizon optimization.

Since $ C_T$ is not necessarily robust positive invariant, it is important to further compute a (maximal) robust control invariant polytope $ C_T^\infty$ inside $ C_T$. This again can be computed via standard recursions, for algorithms and their finite termination cf. \cite{Blanchini1999_SetInvarianceInControl} and references therein.
For convenience we define $C_{T,x}^\infty = \operatorname{Proj}_x(C_T^\infty)$ to be the projection of $C_T^\infty$ onto the first $n$ coordinates.

\begin{ass}
  The set $C_{T,x}^\infty$ is bounded.
\end{ass}

\begin{rem}
  It is important to keep the constraint on the input $v_0$ in the computation of the robust control invariant set in order to guarantee existence of an input that makes the set robustly forward invariant \emph{and} steers the nominal system into the terminal region.
\end{rem}

\begin{rem}
  The computation of the sets $ C_T$ and $ C_T^\infty$ may be involved for high dimensions and limits the proposed approach. Nevertheless, this is a long-standing, standard problem in (linear) controller design and efficient algorithms to exactly calculate or approximate those sets exist, e.g. \cite{Kolmanovsky1998_TheoryAndComputationOfDisturbanceInvariantSets}.
\end{rem}

\subsection{Resulting Stochastic MPC Algorithm}
The final MPC algorithm can be divided into two parts: (i) an offline computation of the involved sets and (ii) the repeated online optimization. In the following, we present the algorithm and state its control theoretic properties.
\\
\emph{Offline:} Solve~\eqref{eqn:offlineChanceConstrProgr}, \eqref{eqn:offlineChanceConstrProgrU} and \eqref{eqn:offlineChanceConstrProgrTermConstr} to determine $\eta_{l}$, $\mu_{l}$ for ${l=0 \ldots T}$.
Determine the first step constraint $ C_T^\infty$ according to the previous section.
\\
\emph{Online:} For each time step $k=1,2,\ldots$
\begin{enumerate}
  \item Measure current state $x_k$,
  \item Solve the linearly constrained quadratic program~\eqref{eqn:RecHorizonOptProg} subject to state and input constraints~\eqref{eqn:detStateConstraint}, \eqref{eqn:detInputConstraint}, first step constraint $ C_T^\infty$ and terminal constraint~\eqref{eqn:termConstr}.
    \begin{equation}
      \begin{aligned}
        v_0,\ldots,v_{T-1} ~&= \arg \min_{v_0,\ldots,v_{T-1}} J_T(x_k,v_{[0,T-1]})\\
        \text{s.t.} ~& z_{l+1} = A z_l + B v_l  \quad z_0 = x_k \\
        ~& H z_l \le \eta_{l}, ~ l\in[1,T-1]\\
        ~& G v_l \le \mu_{l}, ~ l\in[0,T-1]\\
        ~& H_T z_T \le \eta_T \\
        ~& (z_0,v_0) \in  C_T^\infty,
      \end{aligned}
      \label{eqn:MPCOptProg}
    \end{equation}
  \item Apply $v_0$
\end{enumerate}

\begin{prop} \label{prop:recFeas}
  The MPC optimization remains feasible if the initial state is inside $C_{T,x}^\infty$.
\end{prop}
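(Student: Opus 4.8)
The plan is to argue by induction on the time step, exploiting two structural properties of $C_T^\infty$: that it is contained in the $T$-step feasibility set $C_T$, and that it is robust control invariant for the nominal dynamics driven by the disturbance.

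The first ingredient is that $x_k \in C_{T,x}^\infty$ implies feasibility of \eqref{eqn:MPCOptProg} at time $k$. If $x_k \in C_{T,x}^\infty = \operatorname{Proj}_x(C_T^\infty)$, then by definition of the projection there exists $v_0 \in \R^m$ with $(x_k, v_0) \in C_T^\infty \subseteq C_T$. By the definition of $C_T$, this in turn guarantees the existence of $v_1, \ldots, v_{T-1} \in \R^m$ such that the nominal trajectory $z_{l+1} = A z_l + B v_l$, $z_0 = x_k$, satisfies $H z_l \le \eta_l$ for $l \in [1,T-1]$, $G v_l \le \mu_l$ for $l \in [0,T-1]$, and $H_T z_T \le \eta_T$. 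Hence $(v_0, \ldots, v_{T-1})$ is a feasible point of \eqref{eqn:MPCOptProg}, so the program is feasible (and, the cost being a positive definite quadratic, it admits a minimizer).

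The second ingredient propagates feasibility forward. Assume $x_k \in C_{T,x}^\infty$, so that by the first step \eqref{eqn:MPCOptProg} admits an optimizer $(v_0^\ast, \ldots, v_{T-1}^\ast)$ whose first component satisfies the first-step constraint, $(x_k, v_0^\ast) \in C_T^\infty$. The algorithm applies $v_0^\ast$, and the real system \eqref{eqn:xsystem} moves to $x_{k+1} = A x_k + B v_0^\ast + B_w w_k$ with $w_k \in \W$. Since $C_T^\infty$ is robust control invariant, from the point $(x_k, v_0^\ast) \in C_T^\infty$, under the committed input $v_0^\ast$ and the admissible disturbance $w_k$, there exists $v_0^+ \in \R^m$ with $(x_{k+1}, v_0^+) \in C_T^\infty$, whence $x_{k+1} \in C_{T,x}^\infty$. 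Combining the two ingredients and inducting, $x_k \in C_{T,x}^\infty$ and \eqref{eqn:MPCOptProg} is feasible for every $k$ whenever the initial state lies in $C_{T,x}^\infty$.

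I expect the only delicate point to be conceptual rather than computational: one must state precisely in what sense $C_T^\infty \subseteq \R^{n+m}$ is ``robust control invariant'', namely that the coordinates $(z_0, v_0)$ represent the current nominal state together with the already-committed control, and that invariance means that for every $(z_0, v_0) \in C_T^\infty$ and every $w \in \W$ the successor $A z_0 + B v_0 + B_w w$ can again be completed to a pair lying in $C_T^\infty$. With that convention fixed, the remaining things to check are merely that the disturbance acting on \eqref{eqn:xsystem} lies in $\W$ (true by the standing assumption on $W$) and that the input fed to the plant is exactly the one appearing in the first-step constraint (true by construction of \eqref{eqn:MPCOptProg}); everything else is a direct unwinding of the definitions of $C_T$ and $C_T^\infty$.
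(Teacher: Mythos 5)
Your proof is correct and follows essentially the same route as the paper's: feasibility at each step from $C_{T,x}^\infty$ being contained in the projection of the feasible set $C_T$, and propagation via the robust control invariance of $C_T^\infty$ under the committed first input $(x_k,v_0^\ast)$. The paper compresses this into two sentences; your version merely unwinds the definitions (and your reading of the invariance of $C_T^\infty$ matches the paper's remark on keeping the $v_0$ constraint in the invariant-set computation).
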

\begin{proof}
  Since $C_{T,x}^\infty$ is a subset of the feasible set, a solution to~\eqref{eqn:MPCOptProg} exists for all $x_k \in C_{T,x}^\infty$. Furthermore since $(x_k,v_0) \in  C_T^\infty$ it holds that $x_{k+1} \in C_{T,x}^\infty$ because of the robust forward invariance property.
\end{proof}

Due to the persistent excitation, it is clear that the system will not converge asymptotically to the origin, but will ``oscillate'' with bounded variance around it.
\begin{thm}
  If $x_0 \in C_{T,x}^\infty$ then the closed loop system under the proposed MPC control law satisfies the probabilistic constraint~\eqref{eqn:probConstraints} for all future times and 
  \begin{equation*}
    \lim_{t\rightarrow \infty} \frac{1}{t} \sum_{k=0}^t \PE\left\{ \|x_k\|_Q^2 \right\} \le (1-\epsilon_f) \PE\left\{ \|B_w w\|_P^2 \right\} + \epsilon_f C
  \end{equation*}
  with $\epsilon_f$ the maximum probability that the previously planned trajectory does not remain feasible, $C = L~max_{w\in\W}\|B_w w\|$ and $L$ the Lipschitz constant of the optimal value function $J_T(\cdot,v_{[0,T-1]}^*)$ of~\eqref{eqn:MPCOptProg}.
\end{thm}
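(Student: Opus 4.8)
The plan is to prove the two assertions separately: first, closed‑loop satisfaction of the chance constraints, which follows almost directly from Proposition~\ref{prop:constrSatisf} together with recursive feasibility; and second, the averaged cost bound, via a stochastic Lyapunov‑type one‑step inequality for the optimal value of~\eqref{eqn:MPCOptProg} followed by a telescoping/averaging argument.

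For the constraint‑satisfaction part, I would first observe that, since $x_0\in C_{T,x}^\infty$ and $C_T^\infty$ is robust positive invariant, Proposition~\ref{prop:recFeas} gives $x_k\in C_{T,x}^\infty$ for every $k$ and every disturbance realization, so~\eqref{eqn:MPCOptProg} is feasible at every step and the applied input $v_0^\ast$ satisfies $Gv_0^\ast\le g$ (the $l=0$ tightening in~\eqref{eqn:offlineChanceConstrProgrU} reduces to the original input constraint because $e_0=0$). At time $k-1$ the optimizer yields $z_1=Ax_{k-1}+Bv_0^\ast$ with $Hz_1\le\eta_1$, and $x_k=z_1+B_w w_{k-1}$. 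Since $w_{k-1}$ has the distribution of $W$ and is independent of $\mathcal F_{k-1}=\sigma(w_0,\dots,w_{k-2})$, the defining property~\eqref{eqn:offlineChanceConstrProgr} of $\eta_1$ (with $l=1$, $e_1=B_w w_0$) gives $\PP\{[H]_j x_k\le[h]_j\mid\mathcal F_{k-1}\}\ge1-[\varepsilon]_j$ almost surely; taking expectations yields~\eqref{eqn:probConstraints} for all $k\ge1$ and all $j$.

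For the cost bound, let $V(x)$ denote the optimal value of the deterministic program~\eqref{eqn:MPCOptProg} started from $z_0=x$, and let $\mathcal F_k=\sigma(w_0,\dots,w_{k-1})$, so that $x_k$ is $\mathcal F_k$‑measurable; note $V(x)\ge\|x\|_Q^2\ge0$ because the $l=0$ stage cost equals $\|x\|_Q^2$ and all remaining terms are nonnegative. The central step is the one‑step estimate
\[
  \PE\{V(x_{k+1})\mid\mathcal F_k\}\le V(x_k)-\|x_k\|_Q^2+(1-\epsilon_f)\,\PE\{\|B_w w\|_P^2\}+\epsilon_f C .
\]
To establish it I would, as in standard MPC, build a candidate at time $k+1$ by shifting the optimizer $v_{[0,T-1]}^\ast$, appending the terminal feedback, and correcting the feed‑forward for the realized disturbance: $v_l=v_{l+1}^\ast+KA_{cl}^l B_w w_k$ for $l\le T-2$ and $v_{T-1}=Kz_{T-1}$; a direct computation shows the resulting nominal trajectory is the shifted one, each coordinate perturbed by $A_{cl}^l B_w w_k$. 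Let $\mathcal A_k$ be the event that this candidate is admissible for~\eqref{eqn:MPCOptProg} at $x_{k+1}$, including the first‑step constraint; by definition of $\epsilon_f$, $\PP(\mathcal A_k^c\mid\mathcal F_k)\le\epsilon_f$. On $\mathcal A_k$ one has $V(x_{k+1})\le J_T(x_{k+1},v_{[0,T-1]})$, and substituting the candidate and expanding the quadratics splits the cost into: the $w_k$‑independent part, which equals $V(x_k)-\|x_k\|_Q^2-\|v_0^\ast\|_R^2$ after the shift; a part linear in $w_k$ of the form $2w_k^\top\Gamma_k$ with $\Gamma_k$ measurable w.r.t.\ $\mathcal F_k$; and a part quadratic in $w_k$ that telescopes to $\|B_w w_k\|_P^2$ by iterating the Lyapunov equation $A_{cl}^\top PA_{cl}+Q+K^\top RK=P$. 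Hence on $\mathcal A_k$, $V(x_{k+1})\le V(x_k)-\|x_k\|_Q^2+\|B_w w_k\|_P^2+2w_k^\top\Gamma_k$. On $\mathcal A_k^c$ I would instead invoke recursive feasibility: $x_{k+1}=z_1^\ast+B_w w_k$ with the nominal successor $z_1^\ast\in C_{T,x}^\infty$, the \emph{deterministic} shift of $v_{[0,T-1]}^\ast$ is admissible at $z_1^\ast$ and gives $V(z_1^\ast)\le V(x_k)-\|x_k\|_Q^2$, and Lipschitz continuity of $V$ gives $V(x_{k+1})\le V(z_1^\ast)+L\|B_w w_k\|\le V(x_k)-\|x_k\|_Q^2+C$. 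Combining the two cases, taking $\PE\{\cdot\mid\mathcal F_k\}$, using $\PE\{w_k\mid\mathcal F_k\}=0$ and the bounds available on $\mathcal A_k^c$ to control the truncated linear term $\PE\{2w_k^\top\Gamma_k\mathbf{1}_{\mathcal A_k}\mid\mathcal F_k\}$ and to reduce the quadratic term by the probability mass removed on $\mathcal A_k^c$, yields the one‑step estimate. Finally I would take total expectation, sum over $k=0,\dots,t$, telescope using $\PE\{V(x_{t+1})\}\ge0$, divide by $t$ and pass to the limit.

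I expect the main obstacle to be the expectation bookkeeping in the last step: because the feasibility event $\mathcal A_k$ depends on $w_k$, the factor $\PP(\mathcal A_k\mid\mathcal F_k)$ cannot simply be pulled out of the $\|B_w w_k\|_P^2$ term, and the linear cross‑term $2w_k^\top\Gamma_k$ does not have zero conditional mean once restricted to $\mathcal A_k$; obtaining precisely the weight $1-\epsilon_f$ on $\PE\{\|B_w w\|_P^2\}$ and gathering all remaining $O(\epsilon_f)$ contributions into $\epsilon_f C$ (possibly at the price of a mildly conservative constant) is what requires the most care. A secondary technicality is verifying that the deterministic shift of the optimizer is genuinely admissible at the nominal successor $z_1^\ast$ — i.e.\ that the tightened state constraints $\eta_l$ and the terminal set $\Z_T$ nest correctly under the one‑step shift — which is exactly where the particular construction of $\eta_l$, $\Z_T$ and $C_T^\infty$ enters.
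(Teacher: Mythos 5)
Your proposal follows essentially the same route as the paper: the optimal value of~\eqref{eqn:MPCOptProg} as a stochastic Lyapunov function, the shifted and disturbance-corrected candidate input giving the one-step decrease $-\|x_k\|_Q^2 + \PE\{\|B_w w\|_P^2\}$ on the feasibility event, the Lipschitz bound $C = L\max_{w\in\W}\|B_w w\|$ on its complement, and a Dynkin/telescoping argument to conclude. The conditioning subtlety you flag at the end --- that the feasibility event depends on $w_k$, so the weight $1-\epsilon_f$ cannot simply be pulled out of the $\|B_w w_k\|_P^2$ term and the linear cross-term no longer has zero conditional mean --- is a genuine issue, but the paper's own proof passes over it in exactly the same place via its ``iterated expectations'' step.
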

\begin{proof}
  Chance constraint satisfaction follows from Proposition~\ref{prop:constrSatisf} and hard input constraint satisfaction from ${e_0 = 0}$ and hence $\mu_0 = g$. Recursive feasibility follows from Proposition~\ref{prop:recFeas}.

  To prove the second part, we use the optimal value of~\eqref{eqn:MPCOptProg} as a stochastic Lyapunov function.
  Let $V(x_k) = J(x_k,v^*_{[0,T-1]})$ be the 
  optimal value of~\eqref{eqn:MPCOptProg} at time $k$.
  The optimal value function is known to be continuous, convex and piecewise quadratic in $x_k$ \cite{Bemporad2002_ExplicitLQRforConstrSys}, hence a Lipschitz constant $L$ on $C_{T,x}^\infty$ exists.
  The old input trajectory does not remain feasible with at most probability $\epsilon_f$, but we can bound the cost increase in that case by $L~\max_{w\in\W} \|B_w w\|$.
  
  Let $\PE\left\{ V(x_{k+1}) | x_k, v_1^*,\ldots,v_{T}^*~\text{feasible}\right\}$ be the expected optimal value at time $k+1$, conditioning on the state at time $k$ and feasibility of the previously optimal input trajectory
  \begin{equation*}
    \begin{aligned}
      & \PE\left\{ V(x_{k+1}) | x_k, v_1^*,\ldots,v_{T}^*~\text{feasible} \right\} - V(x_k) \\
      =& \sum_{l=1}^{T-1} \left(  \|z_l^*\|_Q^2 + \|v_l^*\|_R^2  \right) + \|z_T^*\|_{(Q+K^\top R K)}^2 + \|z_{T+1}^*\|_P^2 ~ \\
      & + \PE\left\{ \sum_{l=1}^{T} \| A_{cl}^{l-1}B_w w_k\|_{(Q +K^\top RK)}^2 + \|A_{cl}^{T}B_w w_k \|_P^2 \right\} \\
      & -\left( \sum_{l=0}^{T-1} \left(  \|z_l^*\|_Q^2 + \|v_l^*\|_R^2  \right) + \|z_T^*\|_P^2 \right) \\
      =& \|z_T^*\|_{(Q+K^\top R K)}^2 + \|z_{T+1}^*\|_P^2 - \|z_0^*\|_Q^2 - \|v_0^*\|_R^2  - \|z_T^*\|_P^2  \\
      & + \PE\left\{ \|B_w w\|_P^2 \right\}  \\
      \le& - \|z_0\|_Q^2 + \PE\left\{ \|B_w w\|_P^2 \right\} 
    \end{aligned}
  \end{equation*}
  where $z_0 = x_k$, $e_0 = 0$ and $v_l^*$, $z_{l+1}^*$, $l=1,\ldots, T-1$ denote the optimal solution of~\eqref{eqn:MPCOptProg} at time $k$ and $z_{T+1}^* = (A+BK)z_T^*$. Note that the expected value of all $w$-$z$ cross-terms equals zero because of the zero-mean and independence assumption. Furthermore, since we defined the terminal cost as the solution to the discrete-time Lyapunov equation it holds that $A_{cl}^\top P A_{cl} + Q + K^{\top}RK = P$.
 
  Taking iterated expectations gives
\begin{equation*}
  \begin{aligned}
    & \PE\left\{ V(x_{k+1}) | x_k \right\} - V(x_k) \\
    =& (1-\epsilon_f)\left(\PE\left\{ V(x_{k+1}) | x_k,v_1,\ldots,v_{T}^*~\text{feas.} \right\} - V(x_k)\right) + \\
    & \epsilon_f ~L~\max_{w\in\W} \|B_w w\| \\
    \le & (1-\epsilon_f)\left(-\|z_0\|_Q^2 + \PE\left\{ \|B_w w\|_P^2 \right\}\right) +  \epsilon_f C.
  \end{aligned}
\end{equation*}
  The final statement follows by Dynkin's Formula, cf. Theorem 2.6 in \cite{Kushner1967_StochStabilityControl}.
\end{proof}
\begin{rem}
  The parameter $\epsilon_f$ can be designed similar to the procedure described in \cite{Kouvaritakis2010_ExplicitUseOfProbConstr} where it is (essentially) equal to $1$.
\end{rem}

\begin{rem}
  Instead of a robust forward invariant terminal region, a terminal region, which is forward invariant with probability $\epsilon_f$, can be used without changing the result. Still, for each disturbance the next state should remain inside $C_{T,x}^\infty$.
  In case a robust forward invariant terminal region is used, the even stronger condition
  \begin{equation*}
    \lim_{t\rightarrow \infty} \frac{1}{t} \sum_{k=0}^t \PE\left\{ \| x_k \|_Q^2 \right\} \le \PE\left\{ \| B_w w\|_P^2 \right\}
  \end{equation*}
  holds.
\end{rem}

\section{Numerical Example} \label{sec:NumExample}
In this section, we demonstrate the performance and enlarged region of attraction of the proposed Stochastic MPC scheme.
To this end, we implemented the DC-DC converter system example taken from \cite{Cannon2011_StochasticTubesinMPC}.
The linearized system is of the form~\eqref{eqn:xsystem} with
\begin{equation*}
  A = \begin{bmatrix} 1 & 0.0075 \\ -0.143 & 0.996 \end{bmatrix}, \quad 
  B = \begin{bmatrix} 4.798 \\ 0.115 \end{bmatrix}, \quad 
  B_w = I_2.
\end{equation*}
The MPC cost weights are $Q = \begin{bmatrix} 1 & 0 \\ 0 & 10 \end{bmatrix}$, $R=1$ and the prediction horizon is $T=8$. For disturbance attenuation in the predictions~\eqref{eqn:eSys} and terminal region, the unconstrained LQR is chosen.
The disturbance distribution is assumed to be a truncated Gaussian with the covariance matrix $\Sigma = \frac{1}{25^2}I_2$ truncated at $\|w\|^2 \le 0.02$.

For the robust set calculations in the terminal region and first step constraint, we chose a polytopic outer approximation with 8 hyperplanes. For the stochastic constraint tightening we used the described sampling approach with an accuracy 
such that the sampled problem is equal to the true chance constrained problem with an $\epsilon$ within the range $[0.95\epsilon,1.05\epsilon]$
and confidence $\beta = 10^{-4}$.

\subsection{Constraint Violation}
First, consider the single chance constraint
\begin{equation}
  \PP_k\left\{ x_1 \le 2 \right\} \ge 0.8
  \label{eqn:ex1Constraint}
\end{equation}
for the linearized DC-DC converter system and initial state of $x_0 = [2.5 ~ 2.8]^\top$.

In \cite{Cannon2011_StochasticTubesinMPC} it has been shown that Stochastic MPC achieves lower closed loop cost compared to Robust MPC. The approach presented in \cite{Cannon2011_StochasticTubesinMPC}, using a confidence region, yields $14.4\%$ constraint violation in the first $6$ steps.

In contrast, the approach taken here, i.e. a direct constraint tightening, achieves a closed loop operation tight at the constraint. A Monte Carlo simulation with $10^4$ realizations showed an average constraint violation in the first 6 steps of $20\%$ and an even lower closed loop cost. Simulation results of the closed loop system for $500$ random disturbances are shown in Figure~\ref{fig:SimResBoth}. The left plot shows the complete trajectories for a simulation time of $15$ steps. The right plot shows the constraint violation in more detail, \eqref{eqn:ex1Constraint} is satisfied with the maximal allowed constraint violation and hence best performance.
\begin{figure}[htpb]
  \subfloat{{ 
    \includegraphics[width=0.30\textwidth]{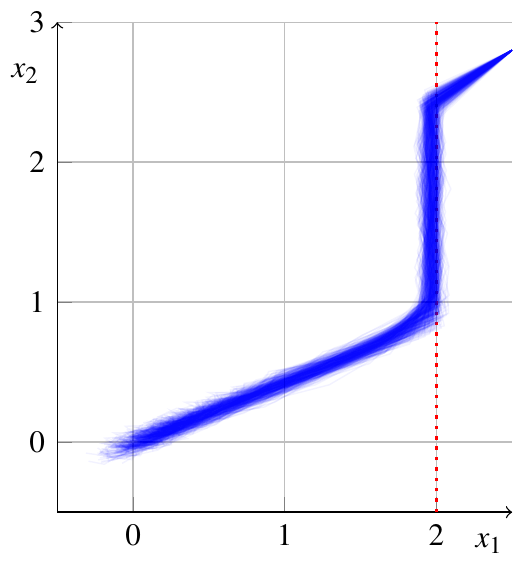}
    }}
  \hspace{-0.3cm}
  \subfloat{{ 
    \includegraphics[width=0.15\textwidth]{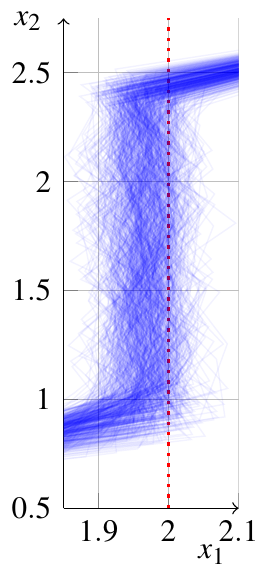}
  }}
  \caption{\small Left: Plot of closed loop response with 500 different disturbance realizations and initial state $x_0 = [2.5 ~ 2.8]^\top$. \\
    Right: Detail showing the trajectories near the constraint $\PP\{x_1 \le 2\} \ge 0.8$.
A Monte Carlo simulation with $10^4$ realizations showed an average constraint violation in the first 6 steps of $20\%$. }
  \label{fig:SimResBoth}
\end{figure}

For comparison, we remark that Robust MPC achieves $0\%$ constraint violation and the $LQ$ optimal solution violates the constraint $100\%$ in the first $3$ steps.

\subsection{Feasible Region}
The main advantage of the proposed Stochastic MPC scheme, compared to more standard use of ``recursively feasible probabilistic tubes'' \cite{Kouvaritakis2010_ExplicitUseOfProbConstr}, is the increased feasible region.

We assume the same setup as before, but with additional chance constraints on the state and hard input constraints
\begin{equation*}
  \begin{aligned}
    \PP_k\{~ |x_1| &\le 2 ~\} \ge 0.8 \\
    \PP_k\{~ |x_2| &\le 3 ~\} \ge 0.8 \\
              |u| &\le 0.2.
  \end{aligned}
\end{equation*}
According to the described setup, we allowed $5\%$ constraint violation in the predictions for the input and a probability of $0.05$ of not reaching the terminal region. In closed loop operation the input was treated as hard constraint.

Figure~\ref{fig:CompFeasReg} shows the different feasible regions of Robust MPC, Stochastic MPC with constraint tightening using recursively feasible probabilistic tubes and the proposed method using probabilistic tubes and a first step constraint.
The feasible region of proposed Stochastic MPC has $1.8$ times the volume of the feasible region of \emph{standard} SMPC and $3.4$ times the volume of the feasible region of Robust MPC. The Robust MPC scheme has been taken from \cite{Mayne2005_RobustMPCofConstrLinSysWithBoundedDist} and only included here for a more complete comparison, it is of course significantly smaller than having stochastic constraints.
\begin{figure}[htpb]
  \begin{center}
    \includegraphics[width=0.45\textwidth]{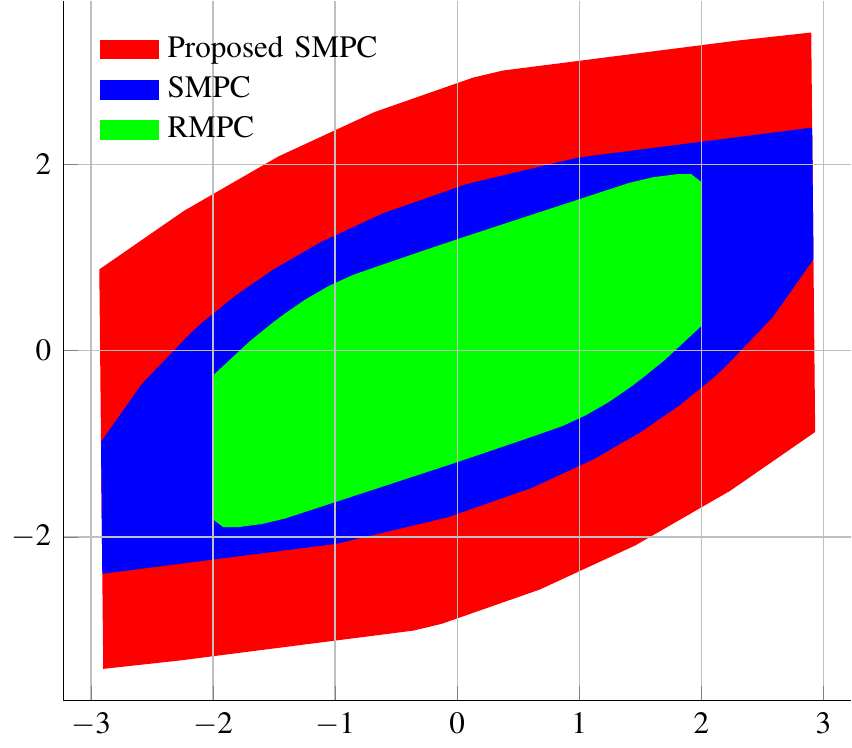}
  \end{center}
  \caption{\small Comparison of feasible region for Robust MPC, Stochastic MPC with recursively feasible probabilistic tubes and proposed Stochastic MPC with guaranteed recursive feasibility.}
  \label{fig:CompFeasReg}
\end{figure}

\section{Conclusions and Further Work} \label{sec:Concl}
The proposed Stochastic MPC algorithm provides a significantly increased feasible region through separating the requirements of recursive feasibility and stability. The stochastic information about the disturbance is used to prove a Lyapunov condition on the average cost. The absolute bounds are used to provide a first step constraint to guarantee recursive feasibility. The online computational effort is equal to that of nominal MPC.
An efficient, broadly applicable solution strategy based on randomized algorithms is presented to solve the offline chance constrained problems to the desired accuracy.

Future work could include choosing appropriate feedback gains to shape the probability distribution of the predicted state in order to better satisfy the constraints. The performance could be improved through an online evaluation of the expected cost, taking into account future infeasibility of the optimized input trajectory.

The idea to incorporate a first step constraint to guarantee recursive feasibility could be further exploited. In the future this could be used in a broader context, e.g. for online sampling to guarantee recursive feasibility in spite of nonzero probability of failure of sampling techniques. It could be nicely combined with ideas of (incomplete) decision trees which show very good results in practice, e.g. \cite{Lucia2013_MultiStageNMPCforSemiBatchReactor}, but have no recursive feasibility or stability guarantees.

Ongoing work includes relaxing the assumption of identically and independently distributed disturbance to e.g. Markov chain models, including parametric uncertainty and implementation and testing at a real-world control problem.

\bibliographystyle{IEEEtranNoUrl}
\bibliography{GlobalBib}
\end{document}